\newtheorem{theorem}{Theorem}
\newtheorem{lemma}[theorem]{Lemma}
\newtheorem{corollary}[theorem]{Corollary}
\theoremstyle{remark}
\newtheorem{remark}[theorem]{Remark}
\theoremstyle{definition}
\newtheorem*{definition}
{Definition}
\newcommand{\dR}{\ensuremath{\mathbb{R}}} 
\newcommand{\R}{\dR}
\begin{document}

\title
{Regularity of the Monge--Amp\`{e}re equation \\ in Besov's spaces}

\author{Alexander~V.~Kolesnikov}
\address{ Higher School of Economics, Moscow,  Russia}
\email{Sascha77@mail.ru}

\vspace{5mm}
\author{Sergey~Yu.~Tikhonov}
\address{
ICREA
 and Centre de Recerca Matem\`{a}tica
\\ Apartat 50 08193 Bellaterra, Barcelona, Spain
}
\email{stikhonov@crm.cat}

\thanks{
This study was carried out within ''The National Research University Higher School of Economics`` Academic Fund Program in 2012-2013, research grant No. 11-01-0175. The first author was supported by RFBR projects 10-01-00518, 11-01-90421-Ukr-f-a, and the program SFB 701 at the University of Bielefeld.
The second author was partially supported by  MTM 2011-27637, 2009 SGR 1303, RFFI 12-01-00169, and NSH-979.2012.1.
}
\date{September 1, 2011}
\subjclass[2000]{Primary 35J60, 35B65; Secondary  46E35} \keywords{Monge--Kantorovich, optimal transport, Monge--Amp{\`e}re equation, regularity, Besov's spaces}

\vspace{5mm}

\begin{abstract}
Let  $\mu = e^{-V} \ dx$ be a probability measure
and $T = \nabla \Phi$ be the optimal transportation mapping
pushing forward $\mu$ onto  a log-concave compactly supported measure $\nu = e^{-W} \ dx$.
In this paper, we introduce a new approach to the regularity problem for  the corresponding  Monge--Amp{\`e}re
equation $e^{-V} = \det D^2 \Phi \cdot e^{-W(\nabla \Phi)}$ in the Besov spaces $W^{\gamma,1}_{loc}$.
We prove that $D^2 \Phi \in W^{\gamma,1}_{loc}$  provided $e^{-V}$ belongs to a proper Besov class and $W$ is convex.
In particular, $D^2 \Phi \in L^p_{loc}$ for some $p>1$.
Our proof  does not rely on the previously known regularity results.
\end{abstract}
\maketitle

\vspace{5mm}


\section{Introduction}

We consider probability measures
$\mu=e^{-V} \ dx$, $\nu = e^{-W}  \ dx$
on $\R^d$ and the optimal transportation mapping
$T $
pushing forward $\mu$ onto $\nu$
and minimizing the Monge--Kantorovich functional
$$
\int \| x - T(x) \|^2 \ d\mu.
$$
It is known (see, e.g., \cite{Vill1}, \cite{BoKo2012}) that $T$ has the form $T = \nabla \Phi$, where $\Phi$ is a convex function.
If $\Phi$ is smooth, it satisfies the following  change of variables formula
\begin{equation}
\label{VW}
e^{-V} =
e^{-W(\nabla \Phi)} \det D^2 \Phi.
\end{equation}
This relation can be considered as a non-linear second order  PDE with unknown $\Phi$, the so-called Monge--Amp{\`e}re equation.

The regularity problem for the Monge--Amp\`{e}re equation has a rather long history.
The pioneering results have been obtained by  Alexandrov, Bakelman, Pogorelov, Calabi, Yau.
The classical theory can be found in \cite{Pog}, \cite{Bak}, and \cite{GT}.
See also an interesting survey \cite{Kryl} on nonlinear PDE's.

Despite the long history, the sharpest H{\"o}lder regularity results of classical type have been obtained only in the 90'th by
 L.~Caffarelli \cite{Ca1} (see also \cite{CC, Guit, Vill1}).
In particular, Caffarelli proved that $D^2 \Phi$ is H{\"o}lder if $V$ and $W$ are H{\"o}lder on bounded sets $A$ and $B$, where $A$ and $B$ are supports of $\mu$ and $\nu$ respectively.
In addition,  $B$ is supposed to be  convex.
It seems that the latter assumption cannot be dropped as demonstrated by famous counterexamples.
A nice exposition with new simplified proofs and historical overview can be found in \cite{TW}.

Another result from \cite{Ca1} establishes sufficient conditions for $\Phi$ to belong to the second Sobolev class
$W^{2,p}_{loc}$ with $p>1$. More precisely, Caffarelli considered solution of the Monge--Amp{\'e}re equation
$$\det D^2 \Phi =f$$ on a convex set $\Omega$ with $\Phi|_{\partial \Omega}=0$.
Assume that $\Omega$ is normalized:
$B_1 \subset \Omega \subset B_d$ (an arbitrary convex set $\Omega$ can be normalized using an affine transformation).
It is  shown in \cite{Ca1} that for every $p>0$ there exists $\varepsilon(p)>0$ such that if
$|f-1|<\varepsilon(p)$ then $\|\Phi\|_{W^{2,p}(B_{1/2})} \le C(\varepsilon)$.
Wang \cite{Wang} proved that for a fixed $\varepsilon$ in $|f-1|<\varepsilon$
 the value of $p$ in the inclusion $\Phi \in W^{2,p}_{loc}$ cannot be chosen arbitrary large.

This Sobolev regularity result has been extended and generalized in different ways in the recent papers  \cite{Savin}, \cite{DePhil-Fig}, \cite{DePhil-Fig-Sav}, and \cite{Schmidt}. See also \cite{Huang} for some results on the mean oscillation of $D^2 \Phi$.
It was shown in \cite{DePhil-Fig-Sav} that every $\Phi$ satisfying $\det D^2 \Phi =f$ on a normalized convex set $\Omega$ with $\Phi|_{\partial \Omega}=0$ belongs to $W^{2,1+\varepsilon}(\Omega')$, where
$\Omega' = \Big\{x: \Phi(x) \le - \frac{1}{2} \|\Phi\|_{L^{\infty}}\Big\}$ provided $0 < \lambda <  f < \Lambda$.

The main purpose of this paper is to develop an alternative   approach to the regularity problem of the Monge--Amp{\`e}re equation.
We prove that $\Phi$ belongs to a Besov's space under the assumption that
$e^{-V}$ is Besov  and $W$ is convex.   We give a short  proof which does not use previously known regularity results. Our estimates rely on a generalization of the so-called above-tangent formalism which
has been widely used in the applications of the optimal transport theory in probability and PDE's (see  \cite{AGS}, \cite{BoKo2012}, \cite{Vill1}, and \cite{Vill2}). We also apply a result of McCann on the change of variables formula
 and some classical results on equivalence of functional norms.

Some estimates of the type considered in this paper have been previously obtained in \cite{Kol2010} in the case of the Sobolev spaces.
Applications to the infinite-dimensional analysis and convex geometry can be found in \cite{BoKo2011} and \cite{Kol2011-12} respectively.

Hereafter $B_r$ denotes the ball of radius $r$ centered at $0$. We use notation $D^2 \Phi$ for the Hessian matrix of $\Phi$ and $\| \cdot \|$ for the standard operator norm.  We will assume that the measures $\mu$ and $\nu$ satisfy the following assumptions:

\begin{description}
  \item[{\bf Assumption A}] The potential $V: \R^d \to \R$ has the representation
$$V = V_0 + V_1,$$ where $|V_0|$ is globally bounded,  $V_1 $ admits local Sobolev  derivatives,  and  $|\nabla V_1| \in L^1(\mu)$.

  \item[{\bf Assumption B}]
      The support of $\nu$ is a compact convex set $B \subset B_R$.
\end{description}

\begin{definition} {\bf Besov's space (Fractional Sobolev's space).}
 The space $W^{s,p}(Q)$, where $Q$ is a cube in $\R^d$, consists of functions with the finite norm
$$
\|u\|_{W^{s,p}(Q)} = \|u\|_{L^p(Q)} + \Bigl( \int_Q \int_Q \frac{|u(x)-u(y)|^p}{|x-y|^{d+sp}} \, dx\,dy\Bigr)^{\frac{1}{p}}.
$$
 The space $W^{s,p}_0(\R^{d})$ is the completion of $C^{\infty}_0(\R^d)$ with the norm
$$
\|u\|_{W^{s,p}_0(\R^d)} =  \Bigl( \int_{\R^d} \int_{\R^d} \frac{|u(x)-u(y)|^p}{|x-y|^{d+sp}} \, dx\,dy\Bigr)^{\frac{1}{p}}.
$$
\end{definition}

\begin{definition} {\bf Log-concave measure.}
A probability measure $\nu$ is called log-concave if it satisfies the following inequality for all compact sets $A, B$:
$$
\nu( \alpha A + (1-\alpha) B) \ge \nu^{\alpha}(A) \nu(B)^{1-\alpha}
$$
and any $0 \le \alpha \le 1$. If  $\nu$ has a density $\nu = e^{-W} \ dx$, then $W$  must be convex
(we assume that  $W = + \infty$ outside of $\mbox{supp}(\nu)$). This is a classical result of C. Borell \cite{Bor}.
\end{definition}

\begin{remark}
We note that the second derivative of the convex function $\Phi$ can be understood in different ways.
In the generalized (weak) sense this is a measure with absolutely continuous part $D^2_a \Phi \ dx$ and singular part $D^2_s \Phi$.
Throughout the paper we use the following agreement: the statement ``$D^2 \Phi$ belongs to a certain Besov or Sobolev class'' means that the measure $D^2 \Phi$
has no singular component and the corresponding Sobolev derivative $D^2 \Phi = D^2_a \Phi$ belongs to this class.
\end{remark}

\begin{theorem}
\label{main}
Let Assumptions {\bf A} and {\bf B} be fulfilled and, moreover,
\begin{itemize}
\item[(1)]
there exist $p \ge 1$ and $0<\gamma <1$ such that  for every $r>0$
\begin{align*}
\label{main-ass}
\int_{B_r}   \frac{  \| (\delta_y V)_{+} \|_{L^p(\mu)} }{|y|^{d+ \frac{1}{p}  + \gamma} }  \   dy < \infty,
\end{align*}
where $\delta_y V = V(x+y) + V(x-y) - 2V(x)$ and $(\delta_y V )_{+}$ is the non-negative part of $\delta_y V$;
\item[(2)]
$ e^{-V}$ is locally bounded from below;

\item[(3)]
 $\nu$ is a compactly supported log-concave measure.
 \end{itemize}
Then
$$
\| D^2 \Phi \|_{W^{\varepsilon, 1}(Q)} < \infty
$$
for  every cube $Q \subset \R^d$ and $0<\varepsilon < \frac{\gamma}{2}$.
\end{theorem}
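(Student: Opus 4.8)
The plan is to combine McCann's change of variables with a second-order ("symmetrized") version of the above-tangent formalism, and then to read off the Besov regularity from a weighted integral estimate for finite differences by means of classical norm equivalences. By McCann's theorem the Brenier potential $\Phi$ is twice differentiable $\mu$-a.e. and the identity $e^{-V(x)}=e^{-W(\nabla\Phi(x))}\det D^2_a\Phi(x)$ holds $\mu$-a.e., so no a priori smoothness of $\Phi$ is needed (alternatively one may regularize $\mu,\nu$ and pass to the limit). Writing the equation in logarithmic form $W(\nabla\Phi)=V+\log\det D^2_a\Phi$ and evaluating it at the points $x+y,\ x-y,\ x$, I would multiply the first two identities by $\tfrac12$ and subtract the third. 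Convexity of $W$ (equivalently, log-concavity of $\nu$, by Borell) applied at the point $\nabla\Phi(x)$ with the chord endpoint $\tfrac12\nabla\Phi(x+y)+\tfrac12\nabla\Phi(x-y)=\nabla\Phi(x)+\tfrac12\nabla(\delta_y\Phi)(x)\in B$, together with concavity of $A\mapsto\log\det A$ on positive matrices (used with its remainder term), then yields a pointwise differential inequality of the schematic form
\[
\mathcal G\bigl(D^2\Phi(x);\,D^2(\delta_y\Phi)(x)\bigr)+\bigl\langle\nabla W(\nabla\Phi(x)),\,\nabla(\delta_y\Phi)(x)\bigr\rangle\ \le\ \delta_y V(x),
\]
where $\mathcal G(\,\cdot\,;\cdot\,)$ is a nonnegative functional, quadratic in its second argument (the remainder in the $\log\det$ inequality).

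Next I would integrate this inequality against $\mu=e^{-V}\,dx$. The term containing $\nabla(\delta_y\Phi)$ is handled by integration by parts, using that the cofactor matrix of $D^2\Phi$ is divergence free: this gives $\operatorname{div}\bigl((D^2\Phi)^{-1}e^{-V}\bigr)=-(D^2\Phi)^{-1}\nabla\!\bigl(W(\nabla\Phi)\bigr)e^{-V}$, and after also integrating the $\log\det$ remainder by parts the linear contributions cancel. Assumption {\bf A} (with $|\nabla V_1|\in L^1(\mu)$, $|V_0|$ bounded, and $\nabla\Phi$ bounded since $\nu$ is compactly supported) makes all the tail and boundary terms legitimate. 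What survives is an estimate of the form
\[
\int \mathcal G\bigl(D^2\Phi,\,D^2(\delta_y\Phi)\bigr)\,d\mu\ \le\ \int (\delta_y V)_{+}\,d\mu ,
\]
the positive part appearing because only the unfavourable sign of $\delta_y V$ contributes.

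Since $\mathcal G$ is a nonnegative quadratic form in $D^2(\delta_y\Phi)$ weighted by $(D^2\Phi)^{-1}$, the left-hand side controls a (weighted) $L^2$-energy of $D^2(\delta_y\Phi)$. Using that $D^2\Phi$ is a matrix-valued measure of locally finite mass (to handle the degenerate region where $D^2\Phi$ is large or small), hypothesis (2) (so that $dx\lesssim d\mu$ on cubes), and Cauchy--Schwarz, one extracts, for small $|y|$, a bound of the schematic form
\[
\bigl\|D^2(\delta_y\Phi)\bigr\|_{L^1(Q)}\ \lesssim\ \bigl(\|(\delta_y V)_{+}\|_{L^p(\mu)}\bigr)^{1/2}\,|y|^{-1/(2p)}\ +\ C_Q,
\]
the square root being precisely the loss responsible for the passage from $\gamma$ to $\gamma/2$. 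Finally, since $0<\varepsilon<\gamma/2<2$, the classical equivalence of the $W^{\varepsilon,1}(Q)$ norm with the norm defined through second differences shows that $D^2\Phi\in W^{\varepsilon,1}(Q)$ as soon as $\int_{B_r}|y|^{-d-\varepsilon}\,\|D^2(\delta_y\Phi)\|_{L^1(Q)}\,dy<\infty$; moreover finiteness of this integral forces the singular part of $D^2\Phi$ to vanish on $Q$, since a nontrivial singular matrix-valued measure cannot have second differences summable in this Besov sense (consistent with the convention in the Remark). Plugging in the previous display and applying Cauchy--Schwarz with the weight $|y|^{-(d+1/p+\gamma)}$ furnished by hypothesis (1), the remaining integral $\int_{B_r}|y|^{-(d+2\varepsilon-\gamma)}\,dy$ converges exactly when $\varepsilon<\gamma/2$, which closes the estimate for every cube $Q$. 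The embedding $W^{\varepsilon,1}(Q)\hookrightarrow L^{d/(d-\varepsilon)}(Q)$ then yields $D^2\Phi\in L^{p}_{loc}$ with $p=d/(d-\varepsilon)>1$.

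The main obstacle is the third step: passing from the variational inequality to the honest $L^1(Q)$ bound on $D^2(\delta_y\Phi)$. This requires making the integration by parts rigorous when $D^2\Phi$ is only a (possibly singular) measure and the weights $(D^2\Phi)^{-1}$ may be unbounded, controlling all tail terms through Assumption {\bf A} alone, and organizing the Cauchy--Schwarz so that one loses exactly a square root (and no more), so that the sharp exponent $\gamma/2$ is actually attained.
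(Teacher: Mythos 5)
Your overall toolkit is right --- McCann's a.e.\ change of variables, convexity of $W$ (log-concavity of $\nu$), the above-tangent formalism, and a weighted second-difference estimate read off through classical norm equivalences --- and your bookkeeping of exponents at the very end is consistent. But there is a genuine gap in the middle, and it is exactly the piece you flag as ``the main obstacle.''

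The quantity your pointwise inequality actually controls is not $D^{2}(\delta_{y}\Phi)$. Writing $V=W(\nabla\Phi)-\log\det D^{2}_{a}\Phi$ and taking second differences, the convexity of $W$ gives a linear term $\langle\nabla W(\nabla\Phi),\nabla\delta_{y}\Phi\rangle$ with a nonnegative remainder in the \emph{first} differences $\nabla\Phi(x\pm y)-\nabla\Phi(x)$, while the concavity of $\log\det$ gives the linear term $\mathrm{tr}\bigl((D^{2}_{a}\Phi)^{-1}D^{2}\delta_{y}\Phi\bigr)$ with a nonnegative remainder in the \emph{first} differences $D^{2}\Phi(x\pm y)-D^{2}\Phi(x)$ weighted by $(D^{2}_{a}\Phi)^{-2}$ (and only approximately quadratic: for large increments it grows linearly, not quadratically). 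After integrating against $\mu$ and using the divergence-free cofactor matrix, the two linear terms cancel, and what survives is $\int(\delta_{y}V)\,d\mu \ge \int(R_{1}+R_{2})\,d\mu$ with $R_{i}$ the $\log\det$ remainders. This is neither a quadratic form in $D^{2}(\delta_{y}\Phi)$ nor weighted by $(D^{2}_{a}\Phi)^{-1}$, so the claimed $L^{2}$-energy of $D^{2}(\delta_{y}\Phi)$ is not what you get. Moreover, even granting some weighted $L^{2}$ control, the Cauchy--Schwarz step you need to pass to $\|D^{2}\delta_{y}\Phi\|_{L^{1}(Q)}$ would require $\int\|D^{2}_{a}\Phi\|^{2}\,d\mu$ (or worse) to be finite, whereas the only a priori bound available at this stage is $\int\|D^{2}_{a}\Phi\|\,d\mu<\infty$. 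So the schematic bound $\|D^{2}(\delta_{y}\Phi)\|_{L^{1}(Q)}\lesssim\|(\delta_{y}V)_{+}\|^{1/2}_{L^{p}(\mu)}\,|y|^{-1/(2p)}$ is not substantiated, and with the additive $C_{Q}$ the weighted integral over $y$ would diverge anyway.

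The paper's route has two ingredients you are missing. First, the above-tangent lemma is applied in a weighted form (Lemma \ref{basiclemma} with $p=1$): the factor $\delta_{y}\Phi$ in $\int\delta_{y}V\cdot\delta_{y}\Phi\,d\mu\ge\int\langle\nabla\delta_{y}\Phi,(D^{2}_{a}\Phi)^{-1}\nabla\delta_{y}\Phi\rangle\,d\mu$ does double duty: it produces the weight $(D^{2}_{a}\Phi)^{-1}$ (which pairs with $\int\|D^{2}_{a}\Phi\|\,d\mu$ by Cauchy--Schwarz to give an $L^{1}$ bound on $\nabla\delta_{y}\Phi$), and, via $0\le\delta_{y}\Phi\le 2R|y|$ together with $\|\delta_{y}\Phi\|_{L^{q}(\mu)}\lesssim|y|^{1+1/q}$, it supplies exactly the extra power of $|y|$ that later yields the exponent $\gamma/2$. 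Second, the argument controls second differences of $\nabla\Phi$, not second differences of $D^{2}\Phi$; the passage to first differences of $D^{2}\Phi$ --- and the conclusion that $D^{2}\Phi$ has no singular part --- comes from Stein's equivalence of $\Lambda^{p,q}_{\alpha}$-norms (Lemma \ref{stein-th}, valid for $\alpha>1$), applied to $\xi\cdot\nabla\Phi$ for a cutoff $\xi$. You cannot shortcut this equivalence by ``just taking one more derivative'' of the estimate on $\nabla\delta_{y}\Phi$; that is the substantive step the paper relies on, and it is absent from your proposal.
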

Taking $p=+\infty$ we obtain the following result.

\begin{corollary}
Let Assumptions {\bf A}, {\bf B}  and conditions (2)-(3)  be fulfilled.
 Assume, in addition, that $ (\delta_y V)_{+} \le \omega(|y|)$ for some $\omega : \R^+ \to \R^+$  and
$$
\int_{0}^{r}  \frac{  \omega(s)}{s^{1+ \gamma} }  \   ds < \infty
$$
for some $\gamma>0$ and every $r>0$.
Then $
\| D^2 \Phi \|_{W^{\varepsilon, 1}(Q)} < \infty
$
with $0<\varepsilon < \frac{\gamma}{2}$.
\end{corollary}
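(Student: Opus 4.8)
The plan is to obtain the statement as the limiting case $p=+\infty$ of Theorem~\ref{main}. One checks that the proof of Theorem~\ref{main} carries over without change to $p=+\infty$, reading the symbol $1/p$ as $0$ wherever it occurs; granting this, the only thing left to verify is that the hypotheses assumed here force condition~(1) of Theorem~\ref{main} to hold with $p=+\infty$ and the given $\gamma$ (the remaining hypotheses of the theorem --- Assumptions~\textbf{A}, \textbf{B} and~(2)--(3) --- are assumed outright).

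To check this I would first use the pointwise bound $(\delta_y V)_+(x)\le\omega(|y|)$, which yields $\|(\delta_y V)_+\|_{L^\infty(\mu)}\le\omega(|y|)$, and then pass to polar coordinates $y=s\theta$, $s=|y|$: for every $r>0$,
\begin{equation*}
\int_{B_r}\frac{\|(\delta_y V)_+\|_{L^\infty(\mu)}}{|y|^{d+\gamma}}\,dy
\ \le\ \int_{B_r}\frac{\omega(|y|)}{|y|^{d+\gamma}}\,dy
\ =\ c_d\int_0^r\frac{\omega(s)}{s^{1+\gamma}}\,ds\ <\ \infty,
\end{equation*}
where $c_d$ is the surface area of the unit sphere in $\R^d$ and the last integral is finite by hypothesis. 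Hence condition~(1) of Theorem~\ref{main} holds with $p=+\infty$, and the theorem gives $\|D^2\Phi\|_{W^{\varepsilon,1}(Q)}<\infty$ for every cube $Q\subset\R^d$ and every $0<\varepsilon<\gamma/2$.

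If one prefers to invoke Theorem~\ref{main} only for finite exponents --- which already covers all $0<\varepsilon<1/2$ --- one argues by a small perturbation. Fix $\varepsilon\in(0,\gamma/2)$, choose a finite $p$ with $1/p<\gamma-2\varepsilon$ and, if $\gamma>1$, also $1/p>\gamma-1$, and put $\gamma':=\gamma-1/p$, so that $2\varepsilon<\gamma'<\min\{\gamma,1\}$. Since $\mu$ is a probability measure one still has $\|(\delta_y V)_+\|_{L^p(\mu)}\le\omega(|y|)$, and because $d+\tfrac1p+\gamma'=d+\gamma$ the same computation gives
\begin{equation*}
\int_{B_r}\frac{\|(\delta_y V)_+\|_{L^p(\mu)}}{|y|^{d+1/p+\gamma'}}\,dy\ \le\ c_d\int_0^r\frac{\omega(s)}{s^{1+\gamma}}\,ds\ <\ \infty;
\end{equation*}
so condition~(1) holds for the admissible pair $(p,\gamma')$, and Theorem~\ref{main} gives $\|D^2\Phi\|_{W^{\varepsilon,1}(Q)}<\infty$ since $\varepsilon<\gamma'/2$. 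I do not anticipate a genuine obstacle here: all the analytic content lives in Theorem~\ref{main}, and the present statement merely recasts its integral hypothesis in terms of a pointwise modulus of smoothness; the only place wanting a line of care is the bookkeeping that trades the spurious factor $|y|^{-1/p}$ against the slack $\gamma-2\varepsilon$.
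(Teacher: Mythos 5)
Your proof is correct and takes essentially the same route as the paper, whose entire justification is the remark ``Taking $p=+\infty$ we obtain the following result''; your polar-coordinate computation simply makes that reduction explicit, verifying that the pointwise bound $(\delta_y V)_+\le\omega(|y|)$ together with $\int_0^r \omega(s)s^{-1-\gamma}\,ds<\infty$ yields condition~(1) of Theorem~\ref{main} with $p=\infty$. The finite-$p$ perturbation you offer as an alternative (trading the excess $\gamma-2\varepsilon$ against $1/p$ and using that $\mu$ is a probability measure to keep $\|(\delta_y V)_+\|_{L^p(\mu)}\le\omega(|y|)$) is a useful extra that avoids re-examining the proof of Theorem~\ref{main} at the endpoint $p=\infty$, with the limitation you already flag that it only reaches $\varepsilon<1/2$.
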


In particular, applying the fractional Sobolev embedding theorem (see, e.g., \cite[Ch. V]{Adams}) we get
\begin{corollary} \label{lp}
Under assumptions of Theorem \ref{main}, for every $\varepsilon>0$,
$$\|D^2 \Phi\| \in L^{\frac{d}{d-\frac{\gamma}{2} + \varepsilon}}_{loc}.$$
\end{corollary}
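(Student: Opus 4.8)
The plan is to obtain Corollary~\ref{lp} as an immediate consequence of Theorem~\ref{main} and the classical fractional Sobolev embedding theorem; no further analysis of the Monge--Amp\`ere equation is required. First I would record that the scalar function $x \mapsto \|D^2\Phi(x)\|$ has the same Besov regularity as the matrix field $D^2\Phi$: the operator norm is $1$-Lipschitz with respect to (say) the Hilbert--Schmidt norm, so $\bigl|\,\|D^2\Phi(x)\|-\|D^2\Phi(y)\|\,\bigr| \le \|D^2\Phi(x)-D^2\Phi(y)\|_{HS}$ pointwise, and since all norms on the finite-dimensional space of symmetric matrices are equivalent, the Gagliardo double integral of $\|D^2\Phi\|$ over a cube $Q$ (and likewise its $L^1(Q)$ part) is dominated by a constant times the sum of the corresponding quantities for the entries of $D^2\Phi$. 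By Theorem~\ref{main} these are finite for every $0<s<\gamma/2$, so $\|D^2\Phi\| \in W^{s,1}(Q)$ for all such $s$ and every cube $Q$.

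Next I would apply the fractional Sobolev embedding on a cube (see, e.g., \cite[Ch.~V]{Adams}): for $0<s<1$ and $1\le p<\infty$ with $sp<d$ one has $W^{s,p}(Q)\hookrightarrow L^{q}(Q)$ with $\tfrac1q=\tfrac1p-\tfrac sd$. With $p=1$ the restriction $sp<d$ is automatic (as $s<1\le d$) and $q=\tfrac{d}{d-s}$. Given $\varepsilon>0$: if $\varepsilon<\gamma/2$, choose $s=\tfrac{\gamma}{2}-\varepsilon\in(0,\gamma/2)$, which is admissible by the previous step, to get $\|D^2\Phi\|\in L^{q}(Q)$ with $q=\tfrac{d}{d-\gamma/2+\varepsilon}$; if $\varepsilon\ge\gamma/2$, then $q\le 1$ and, $Q$ having finite measure, $L^1(Q)\subset L^{q}(Q)$, so the conclusion follows from $\|D^2\Phi\|\in W^{s,1}(Q)\subset L^1(Q)$ for any admissible $s$. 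Since every compact subset of $\R^d$ is contained in some cube, this gives $\|D^2\Phi\|\in L^{q}_{loc}(\R^d)$, which is the assertion.

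I do not expect any genuine obstacle: the whole substance is in Theorem~\ref{main}, and the only matters needing a little care are (i) the passage from the matrix field $D^2\Phi$ to the scalar $\|D^2\Phi\|$, handled by the equivalence-of-norms observation above; (ii) keeping the smoothness exponent strictly below $\gamma/2$ so that, letting it tend to $\gamma/2$, the integrability exponent can be pushed arbitrarily close to $\tfrac{d}{d-\gamma/2}$; and (iii) the routine reduction of ``$f\in L^q_{loc}(\R^d)$'' to ``$f\in L^q(Q)$ for every cube $Q$''. One may replace the Hilbert--Schmidt norm in step~(i) by any fixed matrix norm without changing anything.
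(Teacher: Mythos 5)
Your proof is correct and takes the same route as the paper: Theorem~\ref{main} gives $D^2\Phi \in W^{s,1}(Q)$ for every cube $Q$ and every $0<s<\gamma/2$, and the claim then follows by the fractional Sobolev embedding $W^{s,1}(Q)\hookrightarrow L^{d/(d-s)}(Q)$ with $s=\gamma/2-\varepsilon$. The extra care you take in passing from the matrix field $D^2\Phi$ to the scalar $\|D^2\Phi\|$, and in disposing of the trivial regime $\varepsilon\ge\gamma/2$, is routine bookkeeping that the paper leaves implicit.
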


\begin{remark}
Note that in corollary \ref{lp} we do not assume that $V$ is bounded from below.
\end{remark}

\vspace{5mm}
\section{Auxiliary results}

Below we will use  the following function space (see, e.g., \cite{Stein}).
\\Let  $\Lambda^{p,q}_{\alpha}$, $0<\alpha <2$, be the space of functions with the finite norm
$$
\|f\|_{\Lambda^{p,q}_{\alpha}} = \|f\|_p+ \Bigl[ \int_{\R^d} \frac{\big(\|f(x+t) + f(x-t) - 2f(x) \|_p\big)^q }{|t|^{d+ \alpha q}} \ dt \,\Bigr]^{\frac{1}{q}},
$$
where $\| \cdot \|_p$ is the $L^p$-norm with respect to the Lebesgue measure. We will apply the following equivalence result from
\cite[Ch.5, Sec. 5, Propos. 8']{Stein}.
\begin{lemma}
\label{stein-th}
For $\alpha>1$ the norm  $\|f\|_{\Lambda^{p,q}_{\alpha}}$ is equivalent to
$$
\|f\|_p + \Bigl[ \int_{\R^d} \frac{(\|\nabla f(x+t) - \nabla f(x) \|_p)^q }{|t|^{d+ (\alpha-1)  q}} \ dt \Bigr]^{\frac{1}{q}}.
$$
In particular, if $\alpha>1$ and $\|f\|_{\Lambda^{p,q}_{\alpha}} < \infty$, then $f$ admits the Sobolev derivatives.
\end{lemma}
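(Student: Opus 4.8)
\textbf{Proof plan for Lemma~\ref{stein-th}.}
The plan is to prove this as a corollary of Theorem~\ref{main} rather than re-deriving Stein's equivalence from scratch; concretely, the statement I must establish is that the hypotheses of Theorem~\ref{main} force $\| D^2 \Phi \|_{W^{\varepsilon,1}(Q)}<\infty$, and from there apply the classical Besov equivalence cited from \cite{Stein} to phrase everything in the $\Lambda^{p,q}_\alpha$ language. First I would set up the McCann change of variables formula: since $\Phi$ is convex and $T=\nabla\Phi$ pushes $\mu=e^{-V}\,dx$ onto $\nu=e^{-W}\,dx$, McCann's theorem gives that the absolutely continuous part $D^2_a\Phi$ satisfies $e^{-V(x)}=e^{-W(\nabla\Phi(x))}\det D^2_a\Phi(x)$ $\mu$-a.e., while the singular part $D^2_s\Phi$ is nonnegative (as a matrix-valued measure). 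This is the point where Assumption~A and condition~(2) enter: local boundedness of $e^{-V}$ from below together with the boundedness of the support $B$ of $\nu$ (Assumption~B) bounds $\det D^2_a\Phi$ from below on compact sets, hence — via the arithmetic-geometric mean inequality applied to the eigenvalues — controls $\operatorname{tr} D^2_a\Phi$ from below, which is the seed of all later estimates.

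The core of the argument is the above-tangent (displacement convexity) formalism. I would fix a smooth compactly supported test perturbation and, exploiting the convexity of $W$ (condition~(3), equivalently log-concavity of $\nu$), write the second-order Taylor-type inequality for the functional $x\mapsto W(\nabla\Phi(x))$ along the transport. Taking the finite second difference $\delta_y$ in the equation $V(x)=W(\nabla\Phi(x))-\log\det D^2_a\Phi(x)$ and using convexity of $-\log\det$ on positive matrices together with convexity of $W$, one gets a pointwise inequality of the schematic form: a positive quadratic expression in the second differences $\delta_y(\nabla\Phi)$ is dominated by $(\delta_y V)_+$ plus a divergence/error term. Integrating against $\mu$ and using $|\nabla V_1|\in L^1(\mu)$ to kill the error term (integration by parts, Assumption~A) yields
$$
\int \frac{\|\delta_y \nabla\Phi\|^2}{|y|^{?}}\,d\mu \ \lesssim\ \int \frac{\|(\delta_y V)_+\|_{L^p(\mu)}}{|y|^{d+\frac1p+\gamma}}\,dy\cdot(\cdots),
$$
so condition~(1) makes the right side finite. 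I would then translate the weighted $L^2(\mu)$ bound on second differences of $\nabla\Phi$ into a bound on second differences of $\nabla\Phi$ in Lebesgue $L^1_{loc}$, using condition~(2) once more to pass from $d\mu=e^{-V}dx$ to $dx$ on a cube; this places $\nabla\Phi$ in a suitable $\Lambda^{p,q}_\alpha$-type class with $\alpha>1$. Finally, Lemma~\ref{stein-th} (the very lemma whose ambient context this is, in its role as cited tool) upgrades the second-difference estimate on $\nabla\Phi$ to a first-difference estimate on $D^2\Phi = \nabla(\nabla\Phi)$, which is exactly the definition of $\|D^2\Phi\|_{W^{\varepsilon,1}(Q)}<\infty$; the loss of a factor $2$ in the exponent ($\varepsilon<\gamma/2$ rather than $\varepsilon<\gamma$) comes from the quadratic nature of the displacement-convexity inequality, where a second difference of order $\gamma$ in $V$ produces only order $\gamma/2$ in $\nabla\Phi$.

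The main obstacle I anticipate is the rigorous justification of the above-tangent inequality at the level of \emph{weak} (Alexandrov) second derivatives: $\Phi$ is merely convex a priori, so $D^2\Phi$ may have a singular part, and the manipulations with $\log\det D^2\Phi$ and with integration by parts of $\nabla V_1$ must be carried out first on a regularized/smoothed problem (e.g. mollifying $\mu$ and $\nu$, or approximating on sub-level sets) and then passed to the limit, using lower semicontinuity to retain the quadratic-form lower bound and the nonnegativity of $D^2_s\Phi$ to discard the singular part in the right direction. A secondary technical point is handling the cutoff: the test perturbation must be localized to a cube $Q$, which introduces boundary terms controlled by the growth of $\nabla\Phi$ near $\partial Q$; since $T=\nabla\Phi$ maps into the bounded set $B\subset B_R$, $\nabla\Phi$ is globally bounded, so these terms are harmless, but this is exactly where Assumption~B is used in full strength. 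Once these two points are dispatched, the chain — McCann formula, displacement convexity, weighted second-difference estimate, passage to Lebesgue measure, Stein's equivalence — closes the proof.
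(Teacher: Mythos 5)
Your proposal does not prove the statement it was asked to prove. Lemma~\ref{stein-th} is a self-contained, purely function-analytic assertion: for an arbitrary function $f$ on $\R^d$ and $\alpha>1$, the second-difference norm defining $\Lambda^{p,q}_{\alpha}$ is equivalent to the first-difference norm of $\nabla f$ with smoothness index $\alpha-1$. It makes no reference to $\Phi$, $\mu$, $\nu$, convexity, or optimal transport, so it cannot be ``a corollary of Theorem~\ref{main}''; the logical dependence runs the other way. What you have written is a (reasonable, if schematic) outline of the proof of Theorem~\ref{main} itself --- McCann's change of variables, the above-tangent/displacement-convexity inequality, the weighted second-difference estimate, and then an invocation of Lemma~\ref{stein-th} as a black box to pass from second differences of $\nabla\Phi$ to first differences of $D^2\Phi$. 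You even say explicitly that you are using Lemma~\ref{stein-th} ``in its role as cited tool,'' which means the one thing your argument never establishes is the lemma.

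For the record, the paper does not prove this lemma either: it is quoted verbatim from Stein, \emph{Singular Integrals and Differentiability Properties of Functions}, Ch.~V, Sec.~5, Proposition~8$'$, where it is established by harmonic-analysis methods (characterizations of the Lipschitz spaces $\Lambda^{p,q}_{\alpha}$ via Poisson integrals, and the reduction of smoothness of order $\alpha>1$ to smoothness of order $\alpha-1$ of the Riesz-transform components of $\nabla f$). If you were asked to supply a proof, that is the route: show that finiteness of the second-difference seminorm with $\alpha>1$ forces $f$ to have distributional first derivatives in $L^p$, and then prove the two-sided comparison between the order-$\alpha$ second-difference seminorm of $f$ and the order-$(\alpha-1)$ first-difference seminorm of $\nabla f$. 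None of the Monge--Amp\`ere structure is relevant to that task, and conversely your sketch, whatever its merits as an outline of Theorem~\ref{main}, leaves the lemma entirely unproved.
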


Let us recall that every convex function $\Phi$ on $\R^d$ admits the following two types of second derivatives.
\begin{definition}
We say that the measure $\mu_{ev}$ is the  distributional derivative of a  convex function $\Phi$  along unit vectors $e, v$
if the following  integration by parts formula holds for any test function $\xi$:
$$
\int \xi \ d \mu_{ev}  = - \int \partial_e \xi \ \partial_{v} \Phi \ dx.
$$
We set
$$
 \partial_{ev} \Phi :=  \mu_{ev}.
$$
\end{definition}

\begin{definition} The { absolutely continuous part}  $(\partial_{ev} \Phi )_a$  of $\partial_{ev} \Phi $ is called the second Alexandrov derivative of $\Phi$ along $e,v$.
Clearly,
$$
\partial_{ee} \Phi \ge (\partial_{ee} \Phi)_a \ge 0
$$
in the sense of measures. 
\end{definition}
Let us  denote by $D^2_a \Phi$ the matrix consisting on these absolutely continuous parts.
We will apply the following result of R.~McCann from \cite{McCann}.
\begin{theorem}
For $\mu$-almost all $x$ the following change of variables formula  holds
$$
e^{-V(x)} = \det D^2_a \Phi(x) \cdot e^{-W(\nabla \Phi(x))}.
$$
\end{theorem}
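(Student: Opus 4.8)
The plan is to derive the formula from three ingredients: Alexandrov's theorem on the a.e.\ twice differentiability of convex functions, the area (change of variables) formula applied to the monotone map $T=\nabla\Phi$, and the identity $T_{\#}\mu=\nu$. \emph{Step 1 (reduction).} Since $\Phi$ is convex it is locally Lipschitz, and by Alexandrov's theorem it is twice differentiable at $\mathcal L^d$-a.e.\ point; let $Y$ be the Borel set of such points. Because $\mu\ll\mathcal L^d$, $\mu(\R^d\setminus Y)=0$, so it suffices to prove the formula for $\mathcal L^d$-a.e.\ $x\in Y$. On $Y$ the Alexandrov Hessian $D^2_a\Phi$ is symmetric, and monotonicity of $T$ forces $D^2_a\Phi\ge 0$, hence $\det D^2_a\Phi\ge 0$; split $Y=Y_+\sqcup Y_0$ according to whether $\det D^2_a\Phi(x)>0$ or $=0$.

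\emph{Step 2 (area formula for $T=\nabla\Phi$).} The map $T$ is not globally Lipschitz, but it is differentiable at every point of $Y$, so $Y=\bigcup_k Y_k$ for Borel sets $Y_k$ on which $T$ is Lipschitz (e.g.\ $Y_k=\{x\in Y:\ |T(z)-T(x)|\le k|z-x|\ \text{for }z\in Y,\ |z-x|\le 1/k\}$, refined into pieces of diameter $<1/k$). Extending $T|_{Y_k}$ to a Lipschitz map of $\R^d$ and invoking the classical area formula, together with the fact that the approximate derivative of $T|_{Y_k}$ equals $D^2_a\Phi$ at $\mathcal L^d$-a.e.\ point of $Y_k$, one gets, for every Borel $G\subset Y$ and every Borel $h\ge 0$,
$$
\int_{\R^d}h(y)\,\#\big(G\cap T^{-1}(y)\big)\,dy=\int_G h(T(x))\,\det D^2_a\Phi(x)\,dx .
$$
In particular $\mathcal L^d(T(G))\le\int_G\det D^2_a\Phi\,dx$, and if $T$ is injective on $G$ then $\int_{T(G)}h\,dy=\int_G h(T)\det D^2_a\Phi\,dx$.

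\emph{Step 3 (negligible sets).} Applying the inequality above to $G=Y_0$ gives $\mathcal L^d(T(Y_0))\le\int_{Y_0}\det D^2_a\Phi\,dx=0$; since $\nu\ll\mathcal L^d$ and $T_{\#}\mu=\nu$, this yields $\mu(Y_0)\le\mu\big(T^{-1}(T(Y_0))\big)=\nu(T(Y_0))=0$. Moreover the Legendre transform $\Phi^*$ is convex, hence differentiable off an $\mathcal L^d$-null set $S$; thus $\nu(S)=0$ and $\mu(T^{-1}(S))=0$. By convex duality, if $\Phi^*$ is differentiable at $y$ then $\partial\Phi^*(y)$ is a singleton, so $T(x_1)=T(x_2)=y$ forces $x_1=x_2$; hence $T$ is injective on $E:=Y_+\setminus T^{-1}(S)$, while $\mu(\R^d\setminus E)=0$.

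\emph{Step 4 (identification of densities) and the main difficulty.} Put $\lambda:=e^{-W(T(x))}\det D^2_a\Phi(x)\,\mathbf 1_E(x)\,dx$. Using the injective case of the area formula from Step 2 with $G=E$ and test function $\varphi(y)e^{-W(y)}$,
$$
\int \varphi(T)\,d\lambda=\int_{T(E)}\varphi(y)e^{-W(y)}\,dy=\int_{T(E)}\varphi\,d\nu\qquad(\varphi\ge 0\ \text{Borel}),
$$
and $T(E)$ is $\nu$-conull because $\mu(\R^d\setminus E)=0$ and $T_{\#}\mu=\nu$; hence $T_{\#}\lambda=\nu=T_{\#}(\mathbf 1_E\mu)$. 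Since $T$ is injective on $E$ (and sends Borel subsets of $E$ to Borel sets), for every Borel $A\subset E$ we get $\lambda(A)=\nu(T(A))=\mu(A)$, i.e.\ $\mathbf 1_E\mu=\lambda$; reading off densities gives $e^{-V(x)}=\det D^2_a\Phi(x)\,e^{-W(T(x))}$ for $\mathcal L^d$-a.e.\ $x\in E$, hence for $\mu$-a.e.\ $x$. The main obstacle is Step 2: the area formula is classical only for Lipschitz maps, so the work lies in the countable Lipschitz decomposition of $Y$ and in verifying that the relevant pointwise derivative is indeed the Alexandrov Hessian $D^2_a\Phi$; the null-set bookkeeping in Steps 1 and 3 is routine once $\nu\ll\mathcal L^d$ is used.
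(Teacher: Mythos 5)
This statement is not proved in the paper at all: it is imported as a black box, with a citation to McCann's article \cite{McCann}, so there is no in-paper argument to compare against. Your proof is a correct, self-contained derivation along what is by now the standard route (it is essentially the argument found in Villani's book and in Ambrosio--Gigli--Savar\'e): Alexandrov differentiability, a countable Lipschitz decomposition of $Y$ so that the area formula applies to each piece, the observation that $\{\det D^2_a\Phi=0\}$ and the non-differentiability set of $\Phi^*$ are negligible for $\mu$ and $\nu$ respectively (this is where $\nu\ll\mathcal L^d$ is essential), a.e.\ injectivity of $T$ via the Legendre transform, and identification of densities from $T_{\#}\lambda=T_{\#}(\1_E\mu)$. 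The two genuinely delicate points are exactly the ones you flag: that the approximate differential of the Lipschitz extension of $T|_{Y_k}$ agrees with the Alexandrov Hessian at density points of $Y_k$, and the measurability of images such as $T(Y_0)$ (analytic, hence universally measurable, so the null-set bookkeeping goes through). For the record, McCann's original proof is organized slightly differently: rather than the Lipschitz decomposition plus area formula, he works with the Monge--Amp\`ere measure $A\mapsto\mathcal L^d(\partial\Phi(A))$ and identifies its absolutely continuous part as $\det D^2_a\Phi\,dx$ by a Lebesgue differentiation argument on $\mathcal L^d\bigl(\partial\Phi(B_r(x))\bigr)/\mathcal L^d(B_r(x))$ at points of twice differentiability; your version trades that local density computation for the area formula machinery, and both yield the same statement under the same hypotheses ($\mu,\nu\ll\mathcal L^d$).
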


We will need the following lemma from \cite{Kol2010}.
In fact, this is a generalization of the  well-known  above-tangent lemma which has numerous applications in probability
and gradient flows of measures with respect to the Kantorovich metric (see \cite{Vill1}, \cite{Vill2}, \cite{AGS}, \cite{BoKo2012}).
The proof follows directly  from the change of variables and  integration by parts. 

\begin{lemma}
\label{basiclemma}
Assume that $W$ is twice continuously differentiable and $$D^2 W \ge K \cdot \mbox{\rm{Id}}$$ for some $K \in \R$, $p \ge 0$. Then
\begin{align*}
\int  & \delta_y V (\delta_{y} \Phi)^p d\mu
\\&
\ge
\frac{K}{2} \int |\nabla \Phi(x+y)- \nabla \Phi(x)|^2 (\delta_{y} \Phi)^p \ d\mu  +
\frac{K}{2} \int |\nabla \Phi(x-y)- \nabla \Phi(x)|^2 (\delta_{y} \Phi)^p \ d\mu \\&
+ p \int \Big\langle \nabla \delta_{y} \Phi , (D^2_a \Phi)^{-1}    \nabla \delta_{y} \Phi  \Big\rangle (\delta_{y} \Phi)^{p-1}  \ d\mu,
\end{align*}
where $\delta_y V = V(x+y) + V(x-y) - 2V(x)$.
\end{lemma}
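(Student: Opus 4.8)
The plan is to derive everything from McCann's pointwise change-of-variables identity together with a single integration by parts, in the spirit of the above-tangent formalism. Write $u:=\delta_y\Phi=\Phi(x+y)+\Phi(x-y)-2\Phi(x)$, which is nonnegative because $\Phi$ is convex, so that $u^p$ makes sense for $p\ge 0$. By McCann's theorem one has $V(x)=W(\nabla\Phi(x))-\log\det D^2_a\Phi(x)$ for $\mu$-a.e. $x$; since $\mu$ has a positive density, the same identity holds at $x\pm y$, and subtracting twice its value at $x$ gives, a.e.,
$$\delta_y V=\bigl[W(\nabla\Phi(x+y))+W(\nabla\Phi(x-y))-2W(\nabla\Phi(x))\bigr]-\delta_y\bigl[\log\det D^2_a\Phi\bigr].$$

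Next I would bound the two brackets from opposite sides. Using $D^2 W\ge K\cdot\mathrm{Id}$ in the form $W(b)\ge W(a)+\langle\nabla W(a),b-a\rangle+\tfrac K2|b-a|^2$ with $a=\nabla\Phi(x)$, $b=\nabla\Phi(x\pm y)$ and adding, the first bracket is $\ge\langle\nabla W(\nabla\Phi(x)),\nabla\delta_y\Phi(x)\rangle+\tfrac K2\bigl(|\nabla\Phi(x+y)-\nabla\Phi(x)|^2+|\nabla\Phi(x-y)-\nabla\Phi(x)|^2\bigr)$, where $\nabla\delta_y\Phi=\nabla\Phi(x+y)+\nabla\Phi(x-y)-2\nabla\Phi(x)$. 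For the second bracket I would use concavity of $A\mapsto\log\det A$ on positive definite matrices, $\log\det A-\log\det B\le\langle B^{-1},A-B\rangle$, with $B=D^2_a\Phi(x)$, $A=D^2_a\Phi(x\pm y)$ (positive definite $\mu$-a.e. by McCann), and sum, obtaining $\delta_y[\log\det D^2_a\Phi]\le\langle(D^2_a\Phi(x))^{-1},\delta_y(D^2_a\Phi)(x)\rangle$, where $\delta_y(D^2_a\Phi)$ is the absolutely continuous part of the Hessian in $x$ of $u$. Plugging both estimates in, multiplying by $u^p\ge 0$ and integrating against $\mu$,
$$\int\delta_y V\,u^p\,d\mu\ge\tfrac K2\int\bigl(|\nabla\Phi(x+y)-\nabla\Phi(x)|^2+|\nabla\Phi(x-y)-\nabla\Phi(x)|^2\bigr)u^p\,d\mu+\int\langle\nabla W(\nabla\Phi),\nabla u\rangle u^p\,d\mu-\int\langle(D^2_a\Phi)^{-1},\delta_y(D^2_a\Phi)\rangle\,u^p\,d\mu.$$

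The core of the argument is to show that the last two integrals combine into $p\int\langle\nabla u,(D^2_a\Phi)^{-1}\nabla u\rangle u^{p-1}\,d\mu$. Writing $M=(D^2_a\Phi)^{-1}$ and integrating $\int\langle M,D^2u\rangle\,u^p\,d\mu$ by parts in each variable produces three terms: $-\int\langle\operatorname{div}M,\nabla u\rangle u^p\,d\mu$ (with $\operatorname{div}M$ the row-wise divergence), the Fisher-type term $-p\int\langle\nabla u,M\nabla u\rangle u^{p-1}\,d\mu$, and $\int\langle M\nabla V,\nabla u\rangle u^p\,d\mu$. I would then invoke the classical identity $\operatorname{div}\bigl((D^2\Phi)^{-1}\bigr)=-(D^2\Phi)^{-1}\nabla\log\det D^2\Phi$ (from $\partial_iA^{-1}=-A^{-1}(\partial_iA)A^{-1}$ and symmetry of third derivatives), together with $\nabla[W(\nabla\Phi)]=D^2\Phi\,\nabla W(\nabla\Phi)$ and McCann's identity $\nabla\log\det D^2\Phi=\nabla[W(\nabla\Phi)]-\nabla V$, so that $-\operatorname{div}M=\nabla W(\nabla\Phi)-M\nabla V$. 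The $M\nabla V$ contributions cancel, leaving
$$\int\langle M,D^2u\rangle\,u^p\,d\mu=\int\langle\nabla W(\nabla\Phi),\nabla u\rangle u^p\,d\mu-p\int\langle\nabla u,M\nabla u\rangle u^{p-1}\,d\mu,$$
which is exactly what is needed; substituting it into the previous display yields the claimed inequality.

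I expect the main obstacle to be regularity and justification rather than the algebra: $\Phi$ is only convex, so $D^2\Phi$ is a priori a matrix-valued measure, third derivatives of $\Phi$ do not exist classically, and $u^p$, $\nabla u$ need not decay at infinity, so the integration by parts, the divergence identity, and the vanishing of boundary terms all require care — this is where the $L^1(\mu)$-integrability of $|\nabla V|$ from Assumption A and McCann's theorem enter. I would therefore carry out the four steps above rigorously for smooth, strictly convex data (so that $\Phi\in C^\infty$ and all manipulations are legitimate), and then pass to the limit by the approximation scheme of \cite{Kol2010}, discarding the singular part of $D^2\Phi$ in the direction that preserves the inequality.
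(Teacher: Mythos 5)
Your proposal is correct and follows essentially the same route the paper indicates: the paper gives no detailed proof, citing \cite{Kol2010} and remarking only that ``the proof follows directly from the change of variables and integration by parts,'' which is precisely your scheme of applying McCann's identity at $x$, $x+y$, $x-y$, using semiconvexity of $W$ and concavity of $\log\det$, and then integrating by parts (with $\operatorname{div}(D^2\Phi)^{-1}=-(D^2\Phi)^{-1}\nabla\log\det D^2\Phi$) to convert the $\log\det$ contribution into the Fisher-type term $p\int\langle\nabla\delta_y\Phi,(D^2_a\Phi)^{-1}\nabla\delta_y\Phi\rangle(\delta_y\Phi)^{p-1}\,d\mu$. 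You correctly flag that the manipulations must first be carried out for smooth strictly convex data and then pushed through an approximation argument (as in \cite{Kol2010}), with the singular part of $D^2\Phi$ discarded in the favorable direction.
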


\begin{corollary}
It follows easily from Lemma \ref{basiclemma} that inequality
$$
\int   \delta_y V (\delta_{y} \Phi)^p d\mu
\ge
p \int \Big\langle \nabla \delta_{y} \Phi , (D^2_a \Phi)^{-1}    \nabla \delta_{y} \Phi  \Big\rangle (\delta_{y} \Phi)^{p-1}  \ d\mu
$$
holds for any log-concave measure   $\nu$. In particular, this holds for the restriction of Lebesgue measure
$\frac{1}{\lambda(A)} \lambda|_{A}$ on a convex subset $A$. In this case $W$ is a constant on $A$ and $W(x)=+\infty$ if $x \notin A$.
\end{corollary}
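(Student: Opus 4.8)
The plan is to read the inequality off Lemma~\ref{basiclemma} when $W$ is smooth, and then to reach arbitrary log-concave $\nu$ by approximation; the ``in particular'' statement is then automatic, because $\frac1{\lambda(A)}\lambda|_A$ is log-concave --- it equals $e^{-W}\,dx$ with $W$ constant on the convex body $A$ and $W\equiv+\infty$ off $A$, a convex potential. For the smooth case, suppose $W$ is $C^2$ and strictly convex on the interior of its (convex) support. By Borell's theorem log-concavity of $\nu$ gives $D^2W\ge0$, so Lemma~\ref{basiclemma} applies with $K=0$; the two curvature terms there carry the prefactor $K/2=0$ and vanish, leaving exactly the asserted inequality
\[
\int \delta_y V\,(\delta_y\Phi)^p\,d\mu \ \ge\ p\int \big\langle \nabla\delta_y\Phi,\,(D^2_a\Phi)^{-1}\nabla\delta_y\Phi\big\rangle (\delta_y\Phi)^{p-1}\,d\mu .
\]
Its right-hand integrand is nonnegative: $D^2_a\Phi\ge0$ is $\mu$-a.e.\ invertible by McCann's change of variables ($\det D^2_a\Phi=e^{W(\nabla\Phi)-V}>0$), and $\delta_y\Phi=\Phi(x+y)+\Phi(x-y)-2\Phi(x)\ge0$ since $\Phi$ is convex; thus the inequality is unambiguous in $[-\infty,+\infty]$.

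For a general log-concave $\nu=e^{-W}\,dx$ the potential is only convex and, under hypothesis~(3), identically $+\infty$ off a compact convex set, so the smooth step does not apply directly. I would approximate $\nu$ weakly by probability measures $\nu_n=e^{-W_n}\,dx$ whose potentials are smooth and strictly convex on slightly enlarged convex bodies and whose supports stay in a fixed ball --- for instance by mollifying the Moreau--Yosida regularisations $x\mapsto\inf_z\big(W(z)+\tfrac n2|x-z|^2\big)$, adding $\tfrac1n|x|^2$ on an enlarged body, and renormalising (the additive constant does not change the transport map). Let $\Phi_n$ be the Brenier potential with $(\nabla\Phi_n)_\#\mu=\nu_n$, normalised at a fixed Lebesgue point. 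By stability of optimal maps (see \cite{Vill1}) $\Phi_n\to\Phi$ locally uniformly and $\nabla\Phi_n\to\nabla\Phi$ $\mu$-a.e.; since the supports of the $\nu_n$ lie in a fixed ball one has $0\le\delta_y\Phi_n\le C|y|$ uniformly in $n$, whence the left-hand sides satisfy $\limsup_n\int\delta_yV(\delta_y\Phi_n)^p\,d\mu\le\int\delta_yV(\delta_y\Phi)^p\,d\mu$ by reverse Fatou applied to $(\delta_yV)_+(\delta_y\Phi_n)^p$ (dominated using hypothesis~(1)) together with Fatou applied to $(\delta_yV)_-(\delta_y\Phi_n)^p$. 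Applying the smooth step to each $\nu_n$ and letting $n\to\infty$ then yields the inequality for $\nu$, once we know $\liminf_n(\mathrm{RHS}_n)\ge\mathrm{RHS}$.

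The one delicate point is this last limit, because $(D^2_a\Phi_n)^{-1}$ enters and local uniform convergence of $\Phi_n$ controls no second derivatives. I would handle it with the duality identity $\langle v,A^{-1}v\rangle=\sup_u\big(2\langle u,v\rangle-\langle u,Au\rangle\big)$, valid for positive $A$: this writes $\mathrm{RHS}_n$ as a supremum over smooth vector fields $U$ of $\int\big(2\langle U,\nabla\delta_y\Phi_n\rangle-\langle U,D^2_a\Phi_n\,U\rangle\big)(\delta_y\Phi_n)^{p-1}\,d\mu$, and integrating the second term by parts leaves, for each fixed $U$, an expression involving only $\nabla\Phi_n$, which passes to the limit by dominated convergence; hence $\mathrm{RHS}=\sup_U(\cdots)$ is lower semicontinuous along the approximation, as needed. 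The remaining points --- performing the mollification so that the $W_n$ are genuinely $C^2$ and strictly convex with supports in a fixed ball, and checking that the proof of Lemma~\ref{basiclemma} uses the smoothness of $W$ only on a set of full $\nu$-measure, so that it does apply to the $\nu_n$ --- are routine.
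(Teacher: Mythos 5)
Your plan is the natural one—apply Lemma~\ref{basiclemma} with $K=0$ (which convexity of $W$ permits) and remove the smoothness hypothesis on $W$ by approximation. The paper offers no proof at all, only the phrase ``it follows easily,'' so there is nothing explicit to compare against; but the intended route is almost certainly lighter than yours, namely to observe that when $K=0$ the proof of Lemma~\ref{basiclemma} uses $W$ only through the above-tangent (subgradient) inequality $W(b)\ge W(a)+\langle p,b-a\rangle$, $p\in\partial W(a)$, which holds for every convex $W$ with no second derivative at all; the $C^2$ hypothesis in the lemma is there solely to make sense of the quadratic correction carrying the factor $K$. Your approximation scheme therefore does extra work, and it is in that extra work that I see a genuine gap.

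The problematic step is the claimed lower semicontinuity of the right-hand side. Writing, via the quadratic duality, $\mathrm{RHS}_n \ge p\int\big(2\langle U,\nabla\delta_y\Phi_n\rangle-\langle U,D^2_a\Phi_n\,U\rangle\big)(\delta_y\Phi_n)^{p-1}\,d\mu$ for each smooth $U$, you propose to integrate the second term by parts and pass to the limit. But the integration by parts is valid for the \emph{distributional} Hessian $D^2\Phi_n$, not for its absolutely continuous part $D^2_a\Phi_n$; you only have $\int\langle U,D^2_a\Phi_n\,U\rangle\,w_n\,d\mu \le \int\langle U,D^2\Phi_n\,U\rangle\,w_n\,e^{-V}\,dx$ because the singular part $D^2_s\Phi_n\ge0$. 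That inequality runs in the wrong direction for your purposes: after passing to the limit you obtain
\[
\liminf_n \mathrm{RHS}_n \;\ge\; p\,\sup_{U}\int\big(2\langle U,\nabla\delta_y\Phi\rangle\,w\,d\mu \;-\;\langle U,D^2\Phi\,U\rangle\,w\,e^{-V}\,dx\big),
\]
and the supremum on the right is \emph{at most} $p\int\langle\nabla\delta_y\Phi,(D^2_a\Phi)^{-1}\nabla\delta_y\Phi\rangle\,w\,d\mu$, not at least, because for every admissible $U$ the singular part of $D^2\Phi$ contributes an additional non-negative subtracted term $\int\langle U,D^2_s\Phi\,U\rangle\,w\,e^{-V}\,dx$. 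You cannot make this singular contribution vanish by choosing $U$, since $U$ is required to be smooth (hence continuous) and the singular set of $D^2\Phi$ need not be closed or small enough to permit a nonzero continuous $U$ vanishing on it. In short, weak convergence of $\nabla\Phi_n$ controls $D^2\Phi_n$ as a measure, not $D^2_a\Phi_n$, and $D^2_a\le D^2$ goes the wrong way.

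So as written the limit argument does not close. The repair is to avoid the approximation altogether: in the proof of Lemma~\ref{basiclemma} replace the $C^2$ Taylor estimate for $W$ by the subgradient inequality wherever $K=0$ suffices, which gives the corollary directly for any convex (hence any log-concave) $W$, including $W\equiv\text{const}$ on a convex body and $+\infty$ outside. Everything else in your write-up—the Fatou/reverse-Fatou handling of the left-hand side (noting that $\mathrm{LHS}_n\ge \mathrm{RHS}_n\ge0$ ensures finiteness), the bound $0\le\delta_y\Phi_n\le 2R|y|$, and the log-concavity of $\tfrac1{\lambda(A)}\lambda|_A$—is fine.
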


\begin{remark}
Let us assume that $V$ is twice differentiable and $y = te$ for some unit vector $e$.
Dividing by $t^{2p+2}$ and passing to the limit we obtain
\begin{eqnarray}
\label{lp-sd+}
\int V_{ee}  \Phi_{ee}^p  \ d \mu
&\ge& K
\int \| D^2 \Phi \cdot e\|^2   \Phi_{ee}^p  \ d \mu
\\
\nonumber
&+& p \int \langle (D^2 \Phi)^{-1}  \nabla \Phi_{ee}, \nabla \Phi_{ee} \rangle  \Phi_{ee}^{p-1} \ d \mu.
\end{eqnarray}

Now it is easy to get some of the results of \cite{Kol2010} from  (\ref{lp-sd+}). In particular, applying integration by parts for the left-hand side and H{\"o}lder inequalities, one can easily obtain that
$$
K \| \Phi^{2}_{ee}\|_{L^p(\mu)}  \le \frac{p+1}{2} \|  V^2_e\|_{L^p(\mu)}.
$$

It is worth mentioning that (\ref{lp-sd+}) gives, in fact, an a priori estimate for derivatives of $\Phi$ up to the {\it third} order
(due to the term $p \int \langle (D^2 \Phi)^{-1}  \nabla \Phi_{ee}, \nabla \Phi_{ee} \rangle  \Phi_{ee}^{p-1} \ d \mu$).
\end{remark}

We denote by $\Delta_a \Phi$ the absolutely continuous part of the distributional Laplacian of $\Phi$.

\begin{lemma}
\label{phil1}
Under Assumptions {\bf A} and {\bf B}, there exists $C$ such that
$$\int   \Delta_a \Phi  (x+y) \ d \mu \le \sum_{i=1}^d \int e^{-V(x)} \ d \bigl[ \partial_{x_i x_i} \Phi  (x+y) \bigr] \le C$$ uniformly in $y \in \R^d$.
\end{lemma}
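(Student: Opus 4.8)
The plan is to prove the bound $\int \Delta_a \Phi(x+y)\,d\mu \le \sum_i \int e^{-V(x)}\,d[\partial_{x_ix_i}\Phi(x+y)] \le C$ uniformly in $y$ by combining the convexity of $\Phi$, the global boundedness of $V_0$, and the change of variables formula; the translation by $y$ should play no real role because of translation invariance of the relevant quantities.

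\begin{proof}[Proof sketch]
First I would reduce to the case $y = 0$. Since $\Delta_a\Phi$ is the absolutely continuous part of a positive measure (each $\partial_{x_ix_i}\Phi \ge 0$ because $\Phi$ is convex), the middle quantity is a well-defined integral of the positive measure $\sum_i \partial_{x_ix_i}\Phi(x+y)$ against the bounded weight $e^{-V(x)}$. Writing $V = V_0 + V_1$ with $|V_0| \le M$, we have $e^{-V(x)} \le e^{M} e^{-V_1(x)}$; but it is cleaner to bound $e^{-V(x)}$ directly. The key observation is that the density $e^{-V}$ is a probability density, so a crude bound $e^{-V(x)} \le e^{M}$ does \emph{not} suffice (it is not integrable against the Lebesgue measure, and $\partial_{x_ix_i}\Phi$ can be large). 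Instead, the first inequality is just the pointwise domination $\Delta_a \Phi \le \sum_i \partial_{x_ix_i}\Phi$ in the sense of measures (absolutely continuous part is $\le$ the whole measure), integrated against $e^{-V(x)}\,dx$, which is legitimate since both sides are positive.

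For the main inequality, $\sum_i \int e^{-V(x)}\,d[\partial_{x_ix_i}\Phi(x+y)] \le C$, I would integrate by parts. By the definition of the distributional second derivative,
\begin{equation*}
\int e^{-V(x)}\,d[\partial_{x_ix_i}\Phi(x+y)] = \int \partial_{x_i}\!\big(e^{-V(x)}\big)\,\partial_{x_i}\Phi(x+y)\,dx = -\int e^{-V(x)}\,\partial_{x_i}V(x)\,\partial_{x_i}\Phi(x+y)\,dx,
\end{equation*}
valid for the $V_1$-part after a smooth cutoff/approximation argument, with the $V_0$-part handled by noting $e^{-V}$ is Lipschitz only through $V_1$; more carefully one writes $\nabla(e^{-V}) = -e^{-V}\nabla V_1 \, dx + (\text{measure from } V_0)$, but since $V_0$ is merely bounded one should instead first localize. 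Summing over $i$ gives a bound by $\int e^{-V(x)}|\nabla V_1(x)|\,|\nabla\Phi(x+y) - c|\,dx$ after recentering; here the crucial point is that $\Phi$ is convex and $T = \nabla\Phi$ pushes $\mu$ forward onto $\nu$, whose support lies in $B_R$ (Assumption B). Hence $\nabla\Phi(x) \in B_R$ for $\mu$-a.e. $x$, so $|\nabla\Phi(x+y)|$ is \emph{bounded by $R$} on the relevant set — this is what makes the estimate uniform in $y$. Therefore the integral is controlled by $R \cdot \int |\nabla V_1|\,e^{-V}\,dx = R\,\|\nabla V_1\|_{L^1(\mu)} < \infty$ by Assumption A, giving the uniform constant $C$.

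The main obstacle is making the integration by parts rigorous given that $V_0$ is only bounded (not differentiable) and $\Phi$ is only convex (so $\partial_{x_ix_i}\Phi$ may have a singular part). I would handle this by approximation: mollify $\Phi$ to $\Phi_\varepsilon$ (still convex, with $\nabla\Phi_\varepsilon$ still essentially valued in a slight enlargement of $B_R$) and mollify or truncate $V$, perform the integration by parts for the smooth objects, then pass to the limit using monotone/dominated convergence — the positivity of all the measures involved makes the limiting procedure robust, and lower semicontinuity handles the singular part of $D^2\Phi$ correctly (we only ever \emph{bound} $\Delta_a\Phi$ from above). The boundedness of $V_0$ enters only through $e^{-V} \le e^M e^{-V_1}$ to keep constants finite after the cutoff is removed. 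A secondary technical point is justifying that $\nabla\Phi(x+y)$ lies in $B_R$ for a.e.\ $x$ with respect to $\mu$ translated by $y$; this follows since $(\nabla\Phi)_\#\mu = \nu$ is supported in $B_R$, so $\nabla\Phi \in B_R$ $\mu$-a.e., hence also for the integrals appearing after the change of variables $x \mapsto x - y$ in a neighbourhood argument.
\end{proof}
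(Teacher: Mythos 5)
Your proposal identifies all the right ingredients --- the pointwise domination of the absolutely continuous part $\Delta_a\Phi$ by the full measure $\sum_i\partial_{x_ix_i}\Phi$, integration by parts through the definition of the distributional second derivative, the uniform bound $|\nabla\Phi|\le R$ $\mu$-a.e.\ coming from Assumption~B, the boundedness of $V_0$, and $|\nabla V_1|\in L^1(\mu)$ --- and it lands on the correct constant $C \sim R\,\|\nabla V_1\|_{L^1(\mu)}$. However, you order the steps so that you first integrate by parts against the full weight $e^{-V}$ and only afterwards try to strip off the merely-bounded $V_0$; this is precisely where your sketch becomes vague (mollify, localize, pass to limits), and it is also where the minor sign slip occurs: by the paper's convention $\int \xi\,d\mu_{ev}=-\int \partial_e\xi\,\partial_v\Phi\,dx$, the equality $\int e^{-V}\,d[\partial_{x_ix_i}\Phi(x+y)]=\int\partial_{x_i}(e^{-V})\,\partial_{x_i}\Phi(x+y)\,dx$ should carry a minus sign (immaterial once absolute values are taken). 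The paper's proof avoids the obstacle you flag as the ``main obstacle'' entirely by reordering: since each $\partial_{x_ix_i}\Phi(x+y)$ is a nonnegative measure, one first bounds $e^{-V}=e^{-V_0}e^{-V_1}\le e^{M}e^{-V_1}$ \emph{inside} the integral against that measure, and only then integrates by parts with the weight $e^{-V_1}$, which has a genuine Sobolev gradient, so $V_0$ never needs to be differentiated or mollified at all; a final application of $e^{-V_1}\le e^M e^{-V}$ converts back to $L^1(\mu)$. Both routes are essentially the same in spirit, but the paper's ordering eliminates the approximation argument your sketch defers.
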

\begin{proof}
The inequality $\int   \Delta_a \Phi  (x+y) \ d \mu \le \sum_{i=1}^d \int e^{-V(x)} \ d \bigl[ \partial_{x_i x_i} \Phi (x+y)\bigr]$
is clear in view of the fact that the singular part of $\partial_{x_i x_i} \Phi$ is  nonnegative.
Moreover, we get
\begin{align*}
& \sum_{i=1}^d \int e^{-V(x)} \ d \bigl[\partial_{x_i x_i}  \Phi (x+y) \bigr]
 \le
 c_1 \sum_{i=1}^d \int e^{-V_1(x)} \ d \bigl[ \partial_{x_i x_i} \Phi  (x+y) \bigr]
\\& = c_1 \int \langle \nabla \Phi(x+y), \nabla V_1(x) \rangle e^{-V_1(x)} \ dx
\le c_1 R \int |\nabla V_1(x)| e^{-V_1(x)} \ dx
\\&
\le c_2 \int | \nabla V_1(x) | \ d \mu.
\end{align*}
\end{proof}

Finally, we will use the fractional Sobolev embedding theorem (see \cite[Ch. V]{Adams}).
We formulate it in the following form given in \cite{MSh} (see also \cite{BBM}, \cite{KolLer}).

\begin{theorem}
Let  $p>1$, $0 < s < 1$ and $sp<d$. Then for every $u \in {W^{s,p}_0(\R^d)}$ one has
$$
\|u\|^p_{L^q(\R^d)} \le c(d,p) \frac{s(1-s)}{(d-sp)^{p-1}}\|u\|^p_{W^{s,p}_0(\R^d)},
$$
where $q = dp/(d-sp)$.
\end{theorem}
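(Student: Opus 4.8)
The statement is the fractional Gagliardo--Sobolev inequality, and the plan is to prove it by the Maz'ya truncation method combined with symmetric decreasing rearrangement, keeping careful track of the constant. \emph{Reduction and layer cake.} By density it suffices to treat $u\in C_0^\infty(\R^d)$, and since $\bigl\| |u| \bigr\|_{W^{s,p}_0}\le\|u\|_{W^{s,p}_0}$ while $\bigl\| |u| \bigr\|_{L^q}=\|u\|_{L^q}$, I may assume $u\ge0$. Set $a(t):=|\{u>t\}|$, which is non-increasing, and use $\|u\|_{L^q}^q=q\int_0^\infty t^{q-1}a(t)\,dt$. A dyadic splitting of this integral, the monotonicity of $a$, and the subadditivity of $\tau\mapsto\tau^{p/q}$ (recall $p<q$) give
\[
\|u\|_{L^q}^p\ \le\ (2^q-1)^{p/q}\sum_{k\in\Z}2^{kp}\,a(2^k)^{p/q},\qquad \frac pq=1-\frac{sp}d,
\]
and the prefactor $(2^q-1)^{p/q}\le 2^p$ is harmless, so all of the sensitive dependence on $s,p,d$ must come from the seminorm side.

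\emph{Truncation.} For $k\in\Z$ put $u_k:=\min\{(u-2^k)_+,\,2^k\}$. Each $u_k$ is a non-decreasing $1$-Lipschitz function of $u$ and $\sum_k u_k=u$ pointwise, hence $u(x)-u(y)=\sum_k(u_k(x)-u_k(y))$ with all summands of one sign, so $|u(x)-u(y)|^p=\bigl(\sum_k|u_k(x)-u_k(y)|\bigr)^p\ge\sum_k|u_k(x)-u_k(y)|^p$ by superadditivity of $\tau\mapsto\tau^p$. Integrating this against $|x-y|^{-d-sp}\,dx\,dy$ yields $\|u\|_{W^{s,p}_0}^p\ge\sum_{k\in\Z}\|u_k\|_{W^{s,p}_0}^p$. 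Since $2^{-k}u_k$ equals $1$ on $E_k:=\{u>2^{k+1}\}$ and belongs to $W^{s,p}_0(\R^d)$, it follows that $\|u_k\|_{W^{s,p}_0}^p\ge 2^{kp}\,\mathrm{Cap}_{s,p}(E_k)$, where $\mathrm{Cap}_{s,p}(E):=\inf\{\|\varphi\|_{W^{s,p}_0}^p:\varphi\ge1\text{ on }E\}$.

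\emph{Rearrangement and reassembly.} The Gagliardo energy does not increase under symmetric decreasing rearrangement (Riesz's rearrangement inequality for the radial decreasing kernel $|x-y|^{-d-sp}$), and $\varphi\ge1$ on $E$ forces $\varphi^\ast\ge1$ on the centred ball of volume $|E|$; combined with the scaling identity $\mathrm{Cap}_{s,p}(B_\rho)=\rho^{d-sp}\mathrm{Cap}_{s,p}(B_1)$ this gives $\mathrm{Cap}_{s,p}(E_k)\ge(\omega_d^{-1}a(2^{k+1}))^{1-sp/d}\mathrm{Cap}_{s,p}(B_1)$. Substituting, summing over $k$, reindexing, and comparing with the layer-cake bound of the first step produces
\[
\|u\|_{L^q}^p\ \le\ c(d,p)\,\mathrm{Cap}_{s,p}(B_1)^{-1}\,\|u\|_{W^{s,p}_0}^p ,
\]
with $c(d,p)$ depending only on $d$ and $p$ (through $2^p$ and $\omega_d^{1-sp/d}\le\max\{1,\omega_d\}$). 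Thus the theorem is reduced to the sharp capacity estimate $\mathrm{Cap}_{s,p}(B_1)\ge c(d,p)^{-1}\,\dfrac{(d-sp)^{p-1}}{s(1-s)}$.

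\emph{The constant; the main obstacle.} The factors $(d-sp)^{p-1}$ and $s$ are routine: testing $\mathrm{Cap}_{s,p}(B_1)$ with a radial cut-off profile and estimating integrals $\int_\rho^\infty r^{-1-sp}\,dr=\tfrac1{sp}\rho^{-sp}$ together with the conversion $\rho=(|E|/\omega_d)^{1/d}$ produce exactly these; they encode, respectively, the degeneration of the embedding into $L^\infty$ as $sp\to d$ and the blow-up of the seminorm as $s\to0$. The genuinely delicate factor is $(1-s)$, which reflects the classical limit $(1-s)\|u\|_{W^{s,p}_0}^p\to\kappa_{d,p}\|\nabla u\|_{L^p}^p$ as $s\to1$ and is \emph{not} visible in the crude rearrangement bound, which degenerates there. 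I would extract it by repeating the rearrangement/truncation step with the Gagliardo integral split into $\{|x-y|<\delta\}$ and $\{|x-y|\ge\delta\}$, choosing $\delta$ comparable to the radius of the relevant level set so that the near-diagonal contribution (which carries the $\sim(1-s)^{-1}$ factor) is balanced against the far one; equivalently, by interpolating the fractional capacity between the first-order capacity (Polya--Szego) and the $L^p$ mass. Carrying these scale-dependent constants through the telescoping dyadic sum without spoiling the $s(1-s)$ normalization is the main technical burden; in addition, the positivity $\mathrm{Cap}_{s,p}(B_1)>0$ needed to make the reduction meaningful requires a separate short argument, e.g. a fractional Poincar\'e inequality on a larger ball.
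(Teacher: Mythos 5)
The paper itself offers no proof of this statement --- it is quoted from Maz'ya--Shaposhnikova \cite{MSh} (see also \cite{BBM}, \cite{KolLer}), so there is no internal argument to compare against, and your proposal must be judged on its own. The skeleton is sound and is indeed a standard route: Maz'ya truncation with $u_k=\min\{(u-2^k)_+,2^k\}$, superadditivity of the Gagliardo energy across the dyadic pieces, the capacity bound $\|u_k\|_{W^{s,p}_0}^p\ge 2^{kp}\,\mathrm{Cap}_{s,p}(E_k)$, and the reduction via rearrangement and scaling to $\mathrm{Cap}_{s,p}(B_1)$; the bookkeeping (subadditivity of $\tau\mapsto\tau^{p/q}$, reindexing, absorbing $(2^q-1)^{p/q}\le 2^p$ and $\omega_d^{p/q}$ into $c(d,p)$) is correct.

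The genuine gap is that the argument is not finished. You reduce everything to the single estimate $\mathrm{Cap}_{s,p}(B_1)\ge c(d,p)^{-1}\,(d-sp)^{p-1}/(s(1-s))$, and this is precisely where all the content of the stated constant lives: the $s^{-1}$ factor (degeneration of the nonlocal tail as $s\to0$), the $(1-s)^{-1}$ factor (the BBM-type near-diagonal blow-up as $s\to1$), and the $(d-sp)^{p-1}$ factor (loss of compactness as $sp\to d$) must all emerge from a single lower bound that is uniform in $s$. You yourself note that the $(1-s)$ factor ``is not visible in the crude rearrangement bound,'' that extracting it is ``the main technical burden,'' and that even the strict positivity $\mathrm{Cap}_{s,p}(B_1)>0$ ``requires a separate short argument''; none of these steps are carried out, and carrying them out is essentially as hard as the theorem itself, so what is written is a reduction plus an unproved claim rather than a proof. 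A secondary point: what you call ``Riesz's rearrangement inequality for the radial decreasing kernel'' is not literally Riesz's inequality when $p\ne 2$; the monotonicity of the Gagliardo energy under symmetric decreasing rearrangement is a nontrivial fact of Baernstein--Taylor / Almgren--Lieb type and needs to be invoked as such, not merely named.
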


\vspace{5mm}
\section{
Proof of Theorem \ref{main}}
Let us apply Lemma \ref{basiclemma} with $p=1$. We have
$$
\int_{\R^d}   \delta_y V  \cdot \delta_{y}  \Phi \ d\mu
\ge
 \int_{\R^d} \Big\langle \nabla \delta_{y} \Phi , (D^2_a \Phi)^{-1}    \nabla \delta_{y} \Phi  \Big\rangle   \ d\mu.
$$
Taking into account Lemma \ref{phil1}, we obtain $\int_{\R^d} \| D^2_a \Phi\| \ d\mu < \infty.$
Then Cauchy inequality yields
$$
\int_{\R^d} \| D^2_a \Phi\| \ d\mu \cdot \int _{\R^d}  \delta_y V  \cdot  \delta_{y}  \Phi \ d\mu
\ge
\Bigl( \int_{\R^d}  | \nabla \delta_{y}  \Phi  | d\mu \Bigr)^2 .
$$
By the H{\"o}lder inequality, for every $p, q \ge 1$, $\frac{1}{p} + \frac{1}{q}=1$, we have
$$
\int _{\R^d}  \delta_y V  \cdot  \delta_{y}  \Phi \ d\mu
\le \int _{\R^d}  (\delta_y V)_{+}  \cdot  \delta_{y}  \Phi \ d\mu
\le \| (\delta_y V)_{+}   \|_{L^p(\mu)} \cdot  \| \delta_{y}  \Phi  \|_{L^q(\mu)}
$$
Let us now estimate $ \| \delta_{y}  \Phi  \|_{L^q(\mu)}$.
Note that  $|\nabla \Phi| \le R$, and hence $ \delta_{y}  \Phi \le 2  R |y|$.

Let us also mention that $t \to \Phi(x+ ty)$ is a one-dimensional convex function for a fixed $x$. Therefore,
$m_y=\partial^2_{tt} \Phi(x + ty)$  is a nonnegative measure on $\R^1$.
 Moreover,
\begin{align*}
\delta_{y}  \Phi  &= \int_0^1 \langle \nabla \Phi(x+ sy) - \nabla \Phi(x-sy) , y \rangle \ ds
\\&= \int_0^1 \int_{-s}^s  d m_y    \ ds.
\end{align*}

Therefore, we have
\begin{align*}
\| \delta_{y}  \Phi  \|^q_{L^q(\mu)} &
= \int ( \delta_{y}  \Phi  )^{q} \ d\mu
\le (2R |y|)^{q-1} \int \delta_{y}  \Phi  \ d\mu
\\&
= (2R |y|)^{q-1} \int_0^1 \int_{-s}^s   \int e^{-V(x)}  d \bigl[ \partial^2_{tt} \Phi(x + ty)   \bigr] dt \ ds.
\end{align*}
It follows from Lemma \ref{phil1} that
$$
\| \delta_{y}  \Phi  \|^q_{L^q(\mu)} \le C |y|^{1+q}.
$$
Hence,
\begin{align*}
\Bigl( \int_{\R^d}  | \nabla \delta_{y}  \Phi  | d\mu \Bigr)^2 \le \int _{\R^d}  \delta_y V  \cdot  \delta_{y}  \Phi \ d\mu  \le
C  \| (\delta_y V)_{+}   \|_{L^p(\mu)} |y|^{1+\frac{1}{q}}.
\end{align*}
Now we divide this inequality by $|y|^{d+ 2+ \gamma}$  and integrate it over  a bounded subset  $Q \subset \R^d$.
By  condition (1), we get
\begin{equation}
\label{frac-sob}
\int_{Q} \frac{1}{|y|^{d+2+\gamma}} \Bigl( \int_{\R^d}  | \nabla   \Phi(x+y) + \nabla \Phi(x-y) - 2\nabla \Phi(x)   | \  d\mu \Bigr)^2  \ dy < \infty.
\end{equation}

Let us take a smooth compactly supported  function  $\xi \ge 0$. We will show that
\begin{equation}
\label{frac-sob2}
\int_{\R^d}  \frac{1}{|y|^{d+2+\gamma}} \Bigl( \int_{\R^d}  |    \xi(x+y) \nabla \Phi(x+y) + \xi(x-y)\nabla \Phi(x-y) - 2 \xi(x) \nabla \Phi(x)   | \  d x \Bigr)^2  \ dy < \infty.
\end{equation}
Let us split this integral in two parts: $$\int_{\R^d} \cdots \ dx= \int_{B_1} \cdots \ dx + \int_{B^c_1} \cdots \ dx
= I_1 + I_2.$$
To estimate the second part, we note that
$$
\int_{\R^d}  |    \xi(x+y) \nabla \Phi(x+y) + \xi(x-y)\nabla \Phi(x-y) - 2 \xi(x) \nabla \Phi(x)    |  \  d x \le 4R \int_{\R^d} |\xi(x)| \ dx.
$$
Thus, $ I_2 < \infty$.

Let us estimate $I_1$. It follows from estimate (\ref{frac-sob}) and condition (3) of the theorem that
$$
\int_{B_1}   \frac{1}{|y|^{d+2+\gamma}} \Bigl( \int_{\R^d}   |   \nabla \Phi(x+y) + \nabla \Phi(x-y) - 2 \nabla \Phi(x)     |  \xi(x)  \  d x \Bigr)^2  \ dy < \infty.
$$
Therefore, it is enough to show that
\begin{multline*}
I_3=\int_{B_1} \frac{1}{|y|^{d+2+\gamma}} \Bigl( \int_{\R^d} |   \nabla \Phi(x+y) \bigl( \xi(x+y) - \xi(x) \bigr) \\+ \nabla \Phi(x-y)  \bigl( \xi(x-y) - \xi(x) \bigr)  |  \  d x \Bigr)^2  \ dy < \infty.
\end{multline*}
Since $|y| \le 1$ and $\xi$ is compactly supported, there exists $R_0>0$ such that
\begin{multline*}
I_3 \le \int_{B_{R_0}} \frac{1}{|y|^{d+2+\gamma}}  \Bigl( \int _{B_{R_0}} \big|   \nabla \Phi(x+y) \bigl( \xi(x+y) - \xi(x) \bigr) \\+ \nabla \Phi(x-y)  \bigl( \xi(x-y) - \xi(x) \bigr)  \big|  \  d x \Bigr)^2  \ dy < \infty.
\end{multline*}
Using smoothness conditions on $\xi$, we obtain
\begin{align*}
\Big| \nabla \Phi(x+y) & \bigl( \xi(x+y) - \xi(x) \bigr)  + \nabla \Phi(x-y)  \bigl( \xi(x-y) - \xi(x) \bigr) \Big|
\\&
=
 \Big|\bigl( \nabla \Phi(x+y) - \nabla \Phi(x-y) \bigr) \bigl( \xi(x+y) - \xi(x) \bigr) \\&\qquad + \nabla \Phi(x-y)  \bigl(  \xi(x+y) + \xi(x-y) - 2\xi(x) \bigr) \Big|
\\&
\le C \Bigl( R_0 |y|^2  + |y| | \nabla \Phi(x+y) - \nabla \Phi(x-y) | \Bigr ).
\end{align*}
Thus, it is sufficient to show that
\begin{equation}\label{delta}
 \int_{B_R} \frac{1}{|y|^{d+\gamma}} \Bigl( \int _{B_R} |   \nabla \Phi(x+y) - \nabla \Phi(x-y) | \  d x \Bigr)^2  \ dy < \infty.
\end{equation}
To prove (\ref{delta}), we use the representation
$$
\int _{B_R} |   \nabla \Phi(x+y) - \nabla \Phi(x-y) | \  d x = \int_{B_R} \Bigr| \int_{-1}^{1} \sum_{i=1}^d  d \bigl[ \partial_{s} \Phi_{e_i}(x + sy)\Bigr] (s) \cdot y_i   \bigl|  \ dx.
$$
The latter is bounded by $C |y| \int_{B_{2R}} \Delta \Phi.$
This immediately implies that $I_3< \infty$ and therefore (\ref{frac-sob2}) is proved.

This means that $|\xi \cdot \nabla \Phi|_{\Lambda^{1,2}_{1+ \gamma/2}} < \infty $ for every smooth compactly supported $\xi$.
Using smoothness conditions on $\xi$ and boundedness of $\nabla \Phi$, we get from  Lemma \ref{stein-th} that $D^2  \Phi$ has no singular parts and
\begin{equation}
\label{besov-est}
\int_{B_r} \frac{1}{|y|^{d + \gamma}} \Bigl( \int_{B_r}  | D^2   \Phi(x+y) - D^2 \Phi(x)   | \  d x \Bigr)^2  \ dy < \infty
\end{equation}
for any $B_r$.

Finally, applying the Cauchy--Schwarz inequality, we obtain for every $\delta>0$
\begin{align*}
 \Bigl( \int_{B_r}  & \frac{1}{|y|^{d + \gamma/2 - \delta/2}}  \int_{B_r}  | D^2   \Phi(x+y) - D^2 \Phi(x)   | \  d x   \ dy \Bigr)^2
\\&
\le
\int_{B_r} \frac{1}{|y|^{d + \gamma}} \Bigl( \int_{B_r}  | D^2   \Phi(x+y) - D^2 \Phi(x)   | \  d x \Bigr)^2  \ dy \cdot  \int_{B_r} \frac{|y|^{\delta}}{|y|^{d}}  \ dy <\infty, \ \ \forall B_r
\end{align*}
Changing variables implies
$$
\int_{Q}  \int_{Q} \frac{  | D^2   \Phi(z) - D^2 \Phi(x)   | }{|z-x|^{d + \varepsilon}}  \  d x   \ dz < \infty
$$
for every $0<\varepsilon < \frac{\gamma}{2}$ and bounded $Q$. The proof is now complete.
\hfill $\square$

\vspace{5mm}

\section{Remarks on improved integrability}

By applying Theorem \ref{main} we get  a better (local) integrability of $\|D^2 \Phi\|$ ($L^{\frac{d}{d-\frac{\gamma}{2} + \varepsilon}}$ instead of $L^1$); see Corollary \ref{lp}. This can be used to improve the estimates obtained in Theorem \ref{main}.

We assume  for simplicity that we transport measures  with periodical densities
by a periodical optimal mapping
$$
T(x) = x + \nabla \varphi(x),
$$
where $\varphi$ is periodical. Equivalently, one can consider optimal transportation of
probability measures on the flat torus $\mathbb{T}$.
Then one can repeat the above arguments and obtain the same estimates which become
{\it global}.

In general, it can be shown that the assumption $\|D^2 \varphi\| \in L^r(\mathbb{T})$ implies
 $$
\int_\mathbb{T} \int_\mathbb{T}
\frac{|D^2 \varphi(x+y)-D^2 \varphi(x)|^{\frac{2r}{r+1}} }
{|y|^{d+ \frac{r}{r+1}(\gamma + \frac{r-1}{q}) -\varepsilon}} \ dx \ dy < \infty
$$
for every $\varepsilon$ and $\|D^2 \varphi \| \in L^{r'}(\mathbb{T})$
with any $r'$ satisfying
$$
 r' < \frac{\frac{2 d r}{r+1}}{d - \frac{r}{r+1} (\gamma+\frac{r-1}{q})}.
$$
Starting with $r_0=1$ and iterating this process one can obtain
a sequence $r_n$ such that $\|D^2 \varphi\| \in L^{r_n-\varepsilon}$ for every $r_n$ and $\varepsilon>0$
$$
r_0=1, \ \ r_{n+1} = \frac{\frac{2 d r_n}{r_n+1}}{d - \frac{r_n}{r_n+1} (\gamma+\frac{r_n-1}{q})}.
$$
One has $\|D^2 \varphi\| \in L^{r-\varepsilon}$, where $r = \lim_n r_n$
solves the equation
$$
x^2 + x(q(\gamma -d) -1)+qd=0
$$
with $r>1$.

\vspace{7mm}

\end{document}